\newtheorem{Thm}{Theorem}
\newtheorem{Lem}[Thm]{Lemma} 
\newtheorem{Def}[Thm]{Definition} 
\long\def \delete #1{} 
\def \sig {{\rm \,sign}} 
\def \dis {{\rm \,ht}} 
\begin{document}
\title{Sodalite Network: Height and Spherical Content (Coordination Sequence)} 
\author{
{W.~Fred Lunnon, NUI Maynooth;} 
}
\maketitle

\begin{abstract} 
The {\sl sodalite} network is the edge-skeleton of the uniform tiling in 
Euclidean {3-dimensional} space by Archimedean tetrakaidecahedra (truncated 
octahedra). We develop explicit expressions for its {\sl height} (minimum 
network path length from some fixed to given vertex) and {\sl coordination} 
(content of network sphere of given height) functions. The final 
discussion should to some extent assist in motivating and signposting our 
proof strategy, in the course of ruminating on its potential generalisation. 
\par 

\end{abstract} 

{\bf Keywords}: sodalite, zeolite, Kelvin foam, bitruncated cubic honeycomb, 
coordination sequence 
\par 
{\bf AMS Classification}: Primary 05A15, Secondary 52C05. 
\par 

\section{Geometry and Symmetry} \label{[geomsymm]} 
\par 

The {\sl sodalite} network is the edge-skeleton of the uniform tiling in 
Euclidean {3-dimensional} space by Archimedean tetrakaidecahedra (truncated 
octahedra). It may be constructed by `bitruncating' vertices of the standard 
cubical tiling, so that octahedra forming around cubical vertices collide and 
in turn become truncated. 
\par 

After scaling up by a factor 4, the canonical cell centred (around a cubical 
vertex) at origin $(0,0,0)$ has 24 vertices 
\begin{eqnarray} \label{canoncell} 
  \nonumber & \{ (0,1,2),\ (0,2,1),\ (2,0,1),\ (2,1,0),\ (1,2,0),\ (1,0,2), & \\ 
  \nonumber & (0,-1,2),\ (0,2,-1),\ (2,0,-1),\ (2,-1,0),\ (-1,2,0),\ (-1,0,2), & \\ 
  \nonumber & (0,-1,-2),\ (0,-2,-1),\ (-2,0,-1),\ (-2,-1,0),\ (-1,-2,0),\ (-1,0,-2), & \\ 
  & (0,1,-2),\ (0,-2,1),\ (-2,0,1),\ (-2,1,0),\ (1,-2,0),\ (1,0,-2) \}, & 
\end{eqnarray} 
generated by the octahedral group ${\cal O}_3$ acting on the canonical vertex 
$O = (0,1,2)$. 
A useful rule of thumb is that $P = (x,y,z)$ represents a network vertex just 
when none of the 6 sums or differences of pairs of integer components vanishes:  
\begin{equation} \label{[thumb]} 
  x \pm y,\ y \pm z,\ z \pm x \ne 0 \pmod 4 . 
\end{equation} 
Each vertex $P$ has four network neighbours, at Euclidean separation $\sqrt{}2$; the neighbours of $O = (0,1,2)$ comprise 
\begin{eqnarray} \label{[nhbrs]} 
  \{ (0,2,1), (0,2,3), (1,0,2), (-1,0,2) \} . 
\end{eqnarray} 
\par

A {\sl sector} denotes any of 48 isomorphs of the canonical fundamental region 
\begin{eqnarray} 
  {\cal U} \equiv \{ (x, y, z)\ |\ x \ge y \ge z \ge 0 \} 
\end{eqnarray} 
under the action of ${\cal O}_3$. To any vertex $P = (x,y,z)$ corresponds an 
isomorph $P' = (x',y',z') \in {\cal U}$, where $x',y',z'$ denotes the same bag 
of integers as $x,y,z$, apart from dropping signs and sorting. 
Remark that $O = (0,1,2) \not= (0,0,0)$, the origin; also $O \notin {\cal U}$ ! 
\par 

With translation symmetry \\ 
${\rm T_3} : P \to P + (2,2,2)$ of path length 3, associate its {\sl quadrant} 
(union of 12 contiguous sectors) 
\begin{eqnarray} \label{[trans3]} 
  \{ (x, y, z)\ |\ x + y \ge 0 \ \&\  y + z \ge 0 \ \&\  z + x \ge 0 \} ; 
\end{eqnarray} 
${\rm T_4} : P \to P + (4,0,0)$ of length 4, its 8-sector quadrant (sextant?) 
\begin{eqnarray} \label{[trans4]} 
  \{ (x, y, z)\ |\ x \ge |y| \ \&\  x \ge |z| \} ; 
\end{eqnarray} 
${\rm T_6} : P \to P + (4,4,0)$ of length 6, its 4-sector quadrant 
\begin{eqnarray} \label{[trans6]} 
  \{ (x, y, z)\ |\ y \ge |z| \ \&\  x \ge y + |z| \} . 
\end{eqnarray} 
${\rm T_3}$ carries the canonical cell to one centred around a cubical centre; 
together with ${\cal O}_3$ it generates the network symmetry group. 
\par 

\section{Height Function} \label{[natdist]} 
\par 

Denote by $\dis(P)$ the {\sl height} of vertex $P$, the minimum path length 
from $O$ to $P$ along network edges. 
\par 

We introduce a mildly indigestible function $h(P)$ on vertices 
(revealed to equal height by Theorem \ref{[distthm]}), 
in terms of $h'(P')$ (equal to mean height over an ${\cal O}_3$ orbit): 
\begin{Def} \label{[disudef]} 
For $P' = (x',y',z') \in {\cal U}$, let 
\begin{eqnarray} \label{[distdef1]}
  h'(P') \equiv x' + y'/2 - s(P')/2 , 
\end{eqnarray} 
where 
\begin{eqnarray*} 
  s(P') &\equiv\quad (0,1,0,-1) &{\rm\ if\ } y' \bmod 4 = 0,1,2,3 , \\ 
      &\quad\times\ (1,0,-1,0) &{\rm\ if\ } x' \bmod 4 = 0,1,2,3 . 
\end{eqnarray*} 
\end{Def} \noindent 
Now extending this to all sectors, 
\begin{Def} \label{[distdef]} 
\begin{eqnarray} \label{[distdef2]}
  h(P) \equiv h'(P') + \begin{cases} 
    0 & {\rm if\ } |x| \ge |y| \ge |z| \\ 
    - \sig(z) & {\rm if\ } |x| \ge |z| \ge |y| \\ 
    - \sig(y) & {\rm if\ } |y| \ge |x| \ge |z| \\ 
    0 & {\rm if\ } |y| \ge |z| \ge |x| {\rm\ and\ } y z \le 0 \\ 
    -2 \sig(z) & {\rm if\ } |y| \ge |z| \ge |x| {\rm\ and\ } y z \ge 0 \\ 
    -2 \sig(z) & {\rm if\ } |z| \ge |x| \ge |y| \\ 
    - \sig(z) & {\rm if\ } |z| \ge |y| \ge |x| {\rm\ and\ } y z \le 0 \\ 
    -3 \sig(y) & {\rm if\ } |z| \ge |y| \ge |x| {\rm\ and\ } y z \ge 0 \\ 
  \end{cases} 
\end{eqnarray} 
\end{Def} \noindent 
The sector offsets involved above are illustrated in Figure \ref{[crossect]}, 
in the form of a section across the $x$ axis, $y$ running 
top to bottom, $z$ left to right, and origin (beneath) centre. 
Remark how sectors with a common coordinate plane share equal offsets; also 
$(x,y,z) \to (-x,y,z)$ is a symmetry, $(x,y,z) \to (x,z,y)$ an antisymmetry. 

\delete{ 
Cross-sections perp. to x-axis of fundamental regions: 
           ___________________
          |\   |    |    |   /|  
          | \+2|    |    | 0/ |  
          |+3\ | +1 | +1 | /-1|  
          |___\|____|____|/___|  
          |    |\   |   /|    |  
          | +2 | \ 0| 0/ | -2 |    \ z -> 
          |    |+1\ | /-1|    |    y 
          |____|___\|/___|____|    |   (o) x 
          |    |   /|\   |    |    v 
          |    |+1/ | \-1|    |  
          | +2 | / 0| 0\ | -2 |  
          |____|/___|___\|____|  
          |   /|    |    |\   |  
          |+1/ | -1 | -1 | \-3|  
          | / 0|    |    |-2\ |  
          |/___|____|____|___\|  
} 

\def \st {\strut} 
\begin{figure}[htb] 
\centering
\begin{tikzpicture}
\matrix [matrix of nodes,text height=8pt,text depth=2pt,text width=0.5cm,nodes={align=center}] (mat)
{ 
 \st &\st &$+2$ &\st &\st &\st  &\st &\st &\st  &$ 0$&\st &\st \\
 \st &\st &\st  &\st &$+1$&\st  &\st &$+1$&\st  &\st &\st &\st \\
 $+3$&\st &\st  &\st &\st &\st  &\st &\st &\st  &\st &\st &$-1$\\

 \st &\st &\st  &\st &\st &$ 0$ &$ 0$&\st &\st  &\st &\st &\st \\
 \st &$+2$&\st  &\st &\st &\st  &\st &\st &\st  &\st &$-2$&\st \\
 \st &\st &\st  &$+1$&\st &\st  &\st &\st &$-1$ &\st &\st &\st \\

 \st &\st &\st  &$+1$&\st &\st  &\st &$\cdot\, O$ &$-1$ &\st &\st &\st \\
 \st &$+2$&\st  &\st &\st &\st  &${\cal U}$&\st &\st  &\st &$-2$ &\st \\
 \st &\st &\st  &\st &\st &$ 0$ &$ 0$&\st &\st  &\st &\st &\st \\

 $+1$&\st &\st  &\st &\st &\st  &\st &\st &\st  &\st &\st &$-3$\\
 \st &\st &\st  &\st &$-1$&\st  &\st &$-1$&\st  &\st &\st &\st \\
 \st &\st &$ 0$ &\st &\st &\st  &\st &\st &\st  &$-2$&\st &\st \\
}; 

 \draw (mat-4-1.north west) -- (mat-4-12.north east); 
 \draw (mat-7-1.north west) -- (mat-7-12.north east); 
 \draw (mat-10-1.north west) -- (mat-10-12.north east); 

 \draw (mat-1-4.north west) -- (mat-12-4.south west); 
 \draw (mat-1-7.north west) -- (mat-12-7.south west); 
 \draw (mat-1-10.north west) -- (mat-12-10.south west); 

 \draw (mat-1-1.north west) -- (mat-12-12.south east); 
 \draw (mat-1-12.north east) -- (mat-12-1.south west); 
 \draw [ultra thick] (mat-7-7.north west) -- (mat-9-9.south east); 
 \draw [ultra thick] (mat-9-7.south west) -- (mat-9-9.south east); 
 \draw [ultra thick] (mat-7-7.north west) -- (mat-9-7.south west); 

\end{tikzpicture}
\caption{Height offsets by sector: $x$ out, $y$ down, $z$ right, origin central.} 
\label{[crossect]} 
\end{figure}
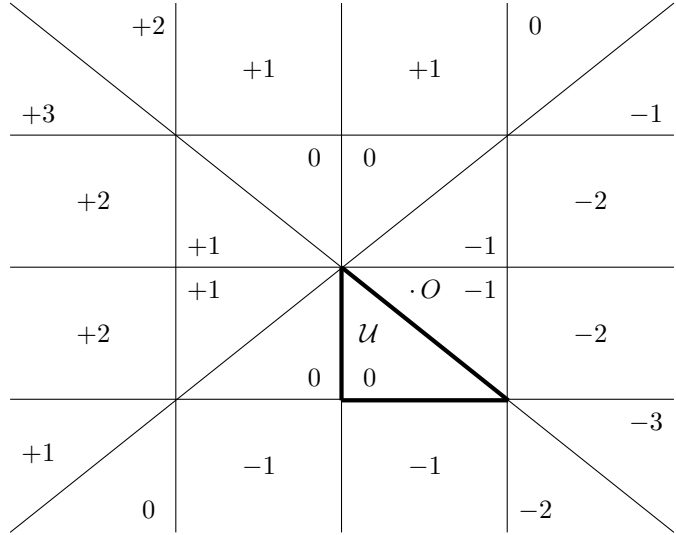 

Within its quadrant, a translation `respects' the pretender height function: 
\begin{Lem} \label{[transrxy]} 
\begin{eqnarray*} 
  h({\rm T_3}P) = h(P) + 3 &{\rm\ for\ }& P \in {\rm T_3} {\rm\ quadrant}; \\ 
  h({\rm T_4}P) = h(P) + 4 &{\rm\ for\ }& P \in {\rm T_4} {\rm\ quadrant}; \\ 
  h({\rm T_6}P) = h(P) + 6 &{\rm\ for\ }& P \in {\rm T_6} {\rm\ quadrant}. 
\end{eqnarray*} 
\end{Lem}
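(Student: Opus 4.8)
The plan is to verify each of the three identities by a finite case analysis over the sectors making up the relevant quadrant, using two structural observations to keep the casework light. First, no network vertex lies on a ``magnitude--equality'' plane $|x|=|y|$, $|y|=|z|$ or $|z|=|x|$: on $x=y$ the integer $x-y$, and on $x=-y$ the integer $x+y$, is $\equiv 0\pmod 4$, against~\eqref{[thumb]}, and likewise for the other two pairs. Hence for any vertex the ordering of $|x|,|y|,|z|$ is strict, the branch of~\eqref{[distdef2]} it selects is unambiguous, and all we need is that each translation keeps a vertex inside the same branch. Second, $h'$ of~\eqref{[distdef1]} sees $P'=(x',y',z')$ only through $x'$, $y'$ and $s(P')$, and $s(P')$ only through $x'\bmod 4$ and $y'\bmod 4$; moreover each of the residue vectors $(0,1,0,-1)$ and $(1,0,-1,0)$ satisfies $v(r+2)=-v(r)$. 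So $s(P')$ is unaffected when $x'$ and $y'$ both increase by $2$ (either a factor is already $0$, or both factors change sign and their product survives), and a fortiori when either moves by a multiple of $4$. As $h'$ does not involve $z'$ at all, whenever $x',y'$ both rise by $2$ (and the magnitude ordering persists) $h'$ rises by exactly $2+\tfrac12\cdot 2=3$; when they both rise by $4$, by $6$; and when only $x'$ rises, by $4$, by $4$.

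The translations $T_4$ and $T_6$ are then immediate. On the $T_4$ quadrant~\eqref{[trans4]}, $|x|=x$ is the strict maximum, so $P'=(x,y',z')$ with $y'=\max(|y|,|z|)$, $z'=\min(|y|,|z|)$; adding $(4,0,0)$ leaves $y,z$ — hence $y'$, $z'$, the two residues, and the selected branch — untouched and raises $x'$ by $4$, so $h'$ rises by $4$, while the branch offset for this quadrant ($0$ or $-\sig(z)$, with $z$ fixed) does not change; thus $h(T_4P)=h(P)+4$. On the $T_6$ quadrant~\eqref{[trans6]}, $x>y>|z|$ with $x,y>0$, so $P'=(x,y,|z|)$; adding $(4,4,0)$ keeps $x>y>|z|$, raises $x'$ and $y'$ each by $4$, fixes both residues, and keeps the branch $|x|\ge|y|\ge|z|$ with offset $0$; so $h(T_6P)=h(P)+6$.

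For $T_3$ — the substantive case — the quadrant~\eqref{[trans3]} is the union of $12$ sectors: the $6$ with $x,y,z\ge 0$ (one per ordering of the three), and the $6$ in which exactly one coordinate is negative and has the strictly smallest absolute value (two negative coordinates would make a pairwise sum negative). In each sector the two coordinates of largest absolute value are strictly positive, so adding $2$ preserves their order; and if $w$ is the smallest coordinate and $p$ one of the other two then $|w|<p$ forces $-(p+2)<w+2<p+2$, hence $|w+2|<p+2$, so $w$ stays smallest. Thus the magnitude branch persists, $x'$ and $y'$ each rise by $2$, and $h'$ rises by $3$. The branch offset is preserved as well: within the $T_3$ quadrant a negative coordinate must be the one of smallest absolute value, so in every line of~\eqref{[distdef2]} the coordinate whose $\sig$ is taken (never the smallest in the lines where it appears) is strictly positive, and adding $2$ cannot flip its sign; and the two lines split according to the sign of $yz$ always fall on the $yz\ge 0$ side, since $|x|$ smallest forces $y,z>0$. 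Adding this invariant offset to $h'(P')+3$ gives $h(T_3P)=h(P)+3$.

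I expect the $T_3$ step to be essentially all the work, and the delicate point there is to confirm, for each of the twelve sectors, both that the magnitude branch really is preserved — done uniformly by the displayed inequality — and that the line of~\eqref{[distdef2]} in force returns the same offset before and after the shift. Since the latter leans entirely on the quadrant hypothesis to control the signs of the coordinates that appear, the prudent way to write the argument is to tabulate the twelve sectors explicitly and read off the branch and its offset in each.
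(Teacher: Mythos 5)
Your proof is correct and follows essentially the same route as the paper's: the heart of both arguments is that $s$ depends only on $x'\bmod 4$ and $y'\bmod 4$ through residue vectors that negate under a shift by $2$, so $s$ is invariant under the relevant component shifts and $h'$ rises by exactly $t$. The paper disposes of the twelve ${\rm T_3}$ sectors with a bare ``without loss of generality'' restricted to $|x|\ge|y|\ge|z|$; your explicit check that the magnitude branch and the sector offset of Definition~\ref{[distdef]} are preserved under the translation is precisely the part the paper leaves to the reader, and you carry it out correctly.
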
 
\begin{proof} 
Within the ${\rm T_3}$ quadrant, suppose without loss of generality that 
$|x| \ge |y| \ge |z|$; then $x \ge y \ge 0$, and via Definition \ref{[distdef]} 
\begin{eqnarray*} 
  h({\rm T_3}P) &=& (x+2) + (y+2)/2 + s(x+2, y+2, z+2)/2 \\ 
  &=& x + y/2 + s(x, y, z)/2 + 3 = h(P) + 3 . 
\end{eqnarray*} 
Similarly for ${\rm T_4}$, supposing that $|y| \ge |z|$, 
\begin{eqnarray*} 
  h({\rm T_4}P) &=& (x+4) + y/2 + s(x+4, y, z)/2 \\ 
  &=& x + y/2 + s(x, y, z)/2 + 4 = h(P) + 4 . 
\end{eqnarray*} 
For ${\rm T_6}$, 
\begin{eqnarray*} 
  h({\rm T_6}P) &=& (x+4) + (y+4)/2 + s(x+4, y+4, z)/2 \\ 
  &=& x + y/2 + s(x, y, z)/2 + 6 = h(P) + 6 . 
\end{eqnarray*} 
\end{proof} \noindent 
\par 

Given a vertex pretending to sufficient height, a useable neighbourhood of it 
lies entirely within some quadrant: 
\begin{Lem} \label{[transect]} 
If vertex $P \in {\cal U}$ with $h(P) \ge 15$, then $P$ together with its 
neighbours, and their images under the associated inverse translation 
${\rm T}^{-1}$, lie within the same quadrant. 
\end{Lem}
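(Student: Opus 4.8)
The plan is to trade the geometric claim for a short list of linear inequalities and settle each by a crude magnitude estimate. First I would note that for $P \in \mathcal{U}$ the case-offset in Definition \ref{[distdef]} is the first one ($0$), so $h(P) = h'(P) = x + y/2 - s(P)/2$ with $s(P)\in\{-1,0,1\}$ and $x\ge y\ge z\ge 0$; reading $h(P)\ge 15$ through this yields $x\ge 10$ outright, and $x\ge 12$ as soon as $y\le 5$. Next I would record that the ten vertices in play are exactly $P+v$ with $v$ ranging over the finite set $\bigl(\{0\}\cup D\bigr)\cup\bigl(-\tau+(\{0\}\cup D)\bigr)$, where $\tau\in\{(2,2,2),(4,0,0),(4,4,0)\}$ is the translation vector of the associated $\mathrm{T}$ and $D$ is the set of neighbour offsets of $P$; since network neighbours sit at Euclidean distance $\sqrt 2$, every member of $D$ is one of the twelve vectors $\pm e_i\pm e_j$, so any $0/{\pm}1$ linear form supported on two coordinates decreases by at most $2$ under a neighbour step. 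Each quadrant \eqref{[trans3]}--\eqref{[trans6]} is a cone cut out by a handful of such forms $\ell_k\ge 0$, so it suffices to verify $\ell_k(P)\ge\max_v\bigl(-\ell_k(v)\bigr)$ for every defining form $\ell_k$ of the quadrant of the associated $\mathrm{T}$, the right-hand side being a small explicit constant.

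Then I would split on which translation is associated to $P$; for $P\in\mathcal{U}$ one may describe this as $\mathrm{T}_3$ when $y+z\ge 6$ and $\mathrm{T}_4$ when $y+z\le 5$ (both quadrants contain all of $\mathcal{U}$, so either is a legitimate association). In the $\mathrm{T}_3$ case, $P\in\mathcal{U}$ forces each of $x+y$, $y+z$, $z+x$ to be $\ge y+z\ge 6$ at $P$; over the ten points each of these drops by at most $4$ (subtracting $\tau=(2,2,2)$ lowers a two-coordinate sum by $4$) plus $2$ (one neighbour step), hence stays $\ge 0$, and all ten points lie in the $\mathrm{T}_3$ quadrant — the one inequality with no slack being $y+z-6\ge 0$. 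In the $\mathrm{T}_4$ case, $y+z\le 5$ gives $y,z\le 5$ and $x\ge 12$, so every one of the ten points has first coordinate $\ge x-4-1 = x-5\ge 7$ while its other two coordinates have absolute value $\le\max(y,z)+1\le 6<7$; each thus satisfies $x_Q\ge|y_Q|$ and $x_Q\ge|z_Q|$ and lies in the $\mathrm{T}_4$ quadrant. (Should the selection rule ever attach $\mathrm{T}_6$ to a vertex of $\mathcal{U}$, the identical bookkeeping applies: there its defining forms $y-|z|$ and $x-y-|z|$ are slack by construction, and $-\tau=-(4,4,0)$ leaves the second unchanged and lowers the first by $4$.)

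I expect the one substantive point — the main obstacle — to be the bridge between the height hypothesis and coordinate slack: seeing that $h(P)\ge 15$, once unpacked as $x+y/2-s(P)/2\ge 15$, makes $x$ large precisely in the regime ($y+z$ small) where the otherwise-natural back-translation $\mathrm{T}_3$ would push a neighbour of $\mathrm{T}_3^{-1}P$ out through the face $y+z=0$, so that deferring to $\mathrm{T}_4$ is safe there; and, dually, that away from that face ($y+z\ge 6$) the three $\mathrm{T}_3$-forms are controlled by $y+z$ alone and need no size hypothesis. Everything past that observation is the routine evaluation of at most a dozen $0/{\pm}1$ forms at $P$ perturbed by one of twelve neighbour vectors and one of three translation vectors, for which the uniform estimate "a two-coordinate form loses at most $2$ per neighbour step" removes any need to identify the actual four neighbours of $P$. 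The constant $15$ is comfortable rather than sharp; the inequalities above close with room to spare.
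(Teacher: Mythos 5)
Your proof is correct and follows essentially the same route as the paper's: split on the size of $y+z$, assign $\mathrm{T}_3$ to the large case and $\mathrm{T}_4$ to the small one (where $h(P)\ge 15$ forces $x\ge 12$), and check the defining inequalities of the relevant quadrant against the worst-case perturbation by a neighbour step and an inverse translation. The only cosmetic differences are your uniform ``loses at most $2$ per neighbour step'' bound in place of the paper's explicitly exhibited worst neighbours, and the split at $y+z\ge 6$ versus the paper's $y+z>6$.
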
 
\begin{proof} 
The components of any neighbour $Q$ vary from those of $P$ by $-1,0,+1$. 
Referring to Equation \ref{[trans3]} etc. --- 
\par 
Case $y + z > 6$ : then $3 \le y \le x$. At worst, 
\begin{eqnarray*} 
  & P = (x, y, z),\ Q = (x, y-1, z-1), & \\ 
  & {\rm T_3}^{-1}P = (x-2, y-2, z-2),\ {\rm T_3}^{-1}Q = (x-2, y-3, z-3); & 
\end{eqnarray*} 
all remain within the quadrant, including ${\rm T_3}^{-1}Q$, since 
\begin{eqnarray*} 
x+y-5 \ge 0 \ \&\ y+z-6 \ge 0 \ \&\ z+x-5 \ge 0 . 
\end{eqnarray*} 
\par 
Case $y + z \le 6$ : then $0 \le y,z \le 6$, and $x \ge 12$ via Definition \ref{[distdef]}. 
At worst, 
\begin{eqnarray*} 
  & P = (x, y, z),\ Q = (x-1, y+1, z), & \\ 
  & {\rm T_4}^{-1}P = (x-4, y, z),\ {\rm T_4}^{-1}Q = (x-5, y+1, z); & 
\end{eqnarray*} 
all remain within the quadrant, including ${\rm T_4}^{-1}Q$, since 
\begin{eqnarray*} 
x-|y|-6 \ge 0 \ \&\ x-|z|-5 \ge 0 . 
\end{eqnarray*} 
\end{proof} \noindent 

\begin{Thm} \label{[distthm]} 
$\dis(P) = h(P)$ for every network vertex $P$. 
\end{Thm}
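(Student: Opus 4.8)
The plan is to prove the two inequalities $\dis(P)\le h(P)$ and $\dis(P)\ge h(P)$ separately, reducing both to a bounded computation plus an induction that is powered by Lemmas \ref{[transrxy]} and \ref{[transect]}. By the $\mathcal{O}_3$-symmetry of the network (which fixes $O$ up to the stabiliser action) and the observation that $h$ is built from the $\mathcal{O}_3$-invariant quantity $h'$ on $\mathcal{U}$ together with sector offsets, it suffices to establish the identity for $P\in\mathcal{U}$; one must first check that $h$ is genuinely well-defined, i.e. that the eight cases in Definition \ref{[distdef]} agree on the overlaps (shared coordinate planes), which the remark after the definition already flags and which is a finite check.

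\medskip
\textbf{Upper bound $\dis(P)\le h(P)$.} The plan is to exhibit, for each $P\in\mathcal{U}$, a neighbour $Q$ with $h(Q)=h(P)-1$; iterating drives $h$ down to $0$ and traces an actual network path from $P$ back to $O$ of length $h(P)$, giving $\dis(P)\le h(P)$. For $P$ with $h(P)\ge 15$, Lemma \ref{[transect]} places $P$, a suitable neighbour $Q$, and their images under the relevant $\mathrm{T}^{-1}$ all inside one quadrant, so Lemma \ref{[transrxy]} lets me replace the step-down problem at $P$ by the step-down problem at the translated vertex of strictly smaller height; thus the whole claim descends to the finite set $\{P\in\mathcal{U}: h(P)<15\}$, where one checks by direct enumeration that a height-decreasing neighbour always exists. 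The symmetric statement — that $h'$ really is the mean of $h$ over an $\mathcal{O}_3$-orbit — can be used as a sanity check but is not logically needed.

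\medskip
\textbf{Lower bound $\dis(P)\ge h(P)$.} Here the plan is to show $h$ cannot drop by more than $1$ along any edge: for every vertex $P$ and every neighbour $Q$, $|h(P)-h(Q)|\le 1$. Combined with $h(O)=0$ this forces $\dis(P)\ge h(P)$ by induction on path length. Again the bounded-height region is handled by brute enumeration over the neighbour sets (using \eqref{[nhbrs]} transported by $\mathcal{O}_3$), while for $h(P)\ge 15$ an edge $PQ$ can be pulled back by a common translation into a lower-height configuration via Lemmas \ref{[transect]} and \ref{[transrxy]} without changing $h(P)-h(Q)$, so the Lipschitz bound propagates inward from the finite core.

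\medskip
The main obstacle is the case analysis across sector boundaries. Both the well-definedness of $h$ and the neighbour steps are delicate precisely when $P$ or $Q$ lies on (or moves across) a coordinate plane $|x|=|y|$, $|y|=|z|$ or near $x=y+|z|$, where the offset in Definition \ref{[distdef]} switches and the $s$-correction in Definition \ref{[disudef]} changes with $x'\bmod 4$ or $y'\bmod 4$; the sign terms $-\sig(y)$, $-\sig(z)$ must then conspire to keep $h$ single-valued and $1$-Lipschitz. Organising this finite but intricate verification cleanly — ideally by reducing to a short list of residue classes of $(x,y,z)\bmod 4$ together with the ordering of $|x|,|y|,|z|$ — is where the real work lies; everything else is the routine translation-induction machinery already assembled in the two preceding lemmas.
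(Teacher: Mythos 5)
Your proposal is correct in outline but takes a genuinely different route from the paper. The paper runs a single induction on $\dis(P)$: for $\dis(P)\ge 15$ it picks a neighbour $Q$ on a geodesic (so $\dis(Q)=\dis(P)-1$), applies the induction hypothesis, and transfers the conclusion through the chain $\dis(P)=\dis(Q)+1=h(Q)+1=h({\rm T}^{-1}Q)+t+1=h({\rm T}^{-1}P)+t=h(P)$, using Lemmas \ref{[transect]} and \ref{[transrxy]} exactly as you do; below height $15$ it simply tabulates $\dis$ and compares against $h$. You instead isolate the two local properties that characterise a graph-distance function --- every vertex other than $O$ has a neighbour on which $h$ drops by exactly $1$, and $h$ changes by at most $1$ along every edge --- prove each by translation-descent to the finite core, and only then deduce $\dis=h$. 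Your decomposition is conceptually cleaner (the two inequalities are logically independent, and your induction never mentions $\dis$ at all, which matches the paper's own remark in the Discussion that knowing $\dis$ explicitly is in some sense inessential), at the price of a finite verification of a different and somewhat heavier kind: rather than comparing a table of $\dis$ with $h$, you must enumerate all edges meeting the core and check the drop-by-one and Lipschitz properties there, and you must additionally confirm that $h\ge 0$ with $h^{-1}(0)=\{O\}$ so that your descending path terminates at $O$ rather than at some other local minimum of $h$. One caution common to both arguments, which you only partly acknowledge: the reduction to $P\in{\cal U}$ is not a plain invariance argument, since $h$ (unlike $h'$) is not ${\cal O}_3$-invariant and $O$ is not fixed by ${\cal O}_3$; moreover ${\rm T}^{-1}P$ generally leaves ${\cal U}$ after one step, so iterating the descent already requires rerunning the machinery with the conjugated translation and the corresponding sector offset from Definition \ref{[distdef]}. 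The paper disposes of this in one sentence and you defer it to your closing paragraph; either way, that cross-sector bookkeeping is where the real work lies, as you say.
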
 
\begin{proof} 
For $\dis(P) < 15$ the assertion is verified via inspection of an inconveniently 
extensive table. For $\dis(P) \ge 15$ via a somewhat delicate induction: 
assume the result for all vertices $R$ with $\dis(R) < \dis(P)$. 
\par 
First suppose $P \in {\cal U}$. 
$P$ has some neighbour $Q$ nearer to $O$, so that $\dis(Q) = \dis(P) - 1$; 
via Lemma \ref{[transect]} all $P,T^{-1}(P),Q,T^{-1}(Q)$ lie in one quadrant, 
and via Lemma \ref{[transrxy]} and network symmetry 
\begin{equation*} 
  \dis(P) = \dis(Q) + 1 = h(Q) + 1 = h({\rm T}^{-1}Q) + t+1 = h({\rm T}^{-1}P) + t = h(P) , 
\end{equation*} 
where $t = 3,4$ for ${\rm T} = {\rm T_3},{\rm T_4}$ resp. 
\par 
For any vertex $P$ now, the procedure above may be applied to its sector, 
employing an appropriate symmetry from ${\cal O}_3$, 
and corresponding height offset from Definition \ref{[distdef]}. 
\end{proof} 
\par 

\section{Spherical Content Function} \label{[natcirc]} 
\par 

Consider now the content of a `sphere' in the corresponding metric, 
also the `vertex coordination sequence' of the tiling: 
that is, the number of vertices $S(n)$ at given height $n$ from $O$. 
\par 

We introduce functions (there are more to come!) for the content of a `sphere' 
of given height $n$, and of the `armillary sphere' where it meets the coordinate 
planes, and of their restrictions to the canonical sector: 
\begin{Def} \label{[bcdefn]} 
\begin{eqnarray*} 
  S(n) &\equiv& \#( P\ |\ \dis(P) = n ) ; \\ 
  {\bar S}(n) &\equiv& \#( P\ |\ \dis(P) = n\ \&\ x y z = 0 ) ; \\ 
  S_2(n) &\equiv& \#( P \in {\cal U}\ |\ \dis(P) = n ) ; \\ 
  {\bar S}_1(n) &\equiv& \#( P \in {\cal U}\ |\ \dis(P) = n\ \&\ x y z = 0 ) ; 
\end{eqnarray*} 
\end{Def} \noindent 

\begin{figure}[htb] 
\centering
\footnotesize 
\begin{tabular*}{0.99\textwidth}{@{\extracolsep{\fill}}|r|rrrrrrrrrrrrr|} 
\hline 
$   n =$& 0& 1&  2&  3&  4&  5&  6&  7&  8&  9& 10& 11& 12 \bigstrut \\ 
\hline 
${\bar S}(n) =$& 1& 4& 8& 12& 16& 20& 24& 28& 32& 36& 40& 44& 48 \bigstrut[t] \\ 
$S(n) =$& 4& 10& 20& 34& 52& 74& 100& 130& 164& 202& 244& 290& 340 \bigstrut[b] \\ 
\hline 
\end{tabular*} 
\caption{Table of network sphere content} 
\label{[ctertab]} 
\end{figure} 

\begin{Lem} \label{[linecomb]} 
There are finitely many constants $a_j,b_j,c_j$ such that for all $n$, 
\begin{eqnarray*} 
{\bar S}(n) &=& \sum_j a_j {\bar S}_1(n + j) , \\ 
S(n) &=& \sum_j b_j {\bar S}_1(n + j) + \sum_j c_j S_2(n + j) . 
\end{eqnarray*} 
\end{Lem}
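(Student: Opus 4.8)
The plan is to organise the total count by ${\cal O}_3$-orbits, with the offset data of Definition \ref{[distdef]} serving as a correction term. First I would record the key structural fact: by the rule of thumb (\ref{[thumb]}), no network vertex can carry two coordinates of equal absolute value ($x=\pm y$ would force $x\mp y\equiv0\pmod4$) nor more than one vanishing coordinate ($y=z=0$ would force $y-z\equiv0$). Hence every vertex $P$ has a unique ${\cal O}_3$-image $P'=(x',y',z')\in{\cal U}$ with $x'>y'>z'\ge0$, and $P'$ lies either in the open region $z'>0$ --- call $P$ \emph{generic} --- or on the plane $z'=0$ --- call $P$ \emph{planar}, which happens exactly when $xyz=0$. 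Since the vertex condition of (\ref{[thumb]}) is invariant under signed permutations, the orbit ${\cal O}_3P'$ consists, when $P$ is generic, of $48$ vertices, one in the interior of each sector, and, when $P$ is planar, of $24$ vertices, one in the relative interior of each of the $24$ facets of the sector arrangement that lie in a coordinate plane. By Theorem \ref{[distthm]} and Definition \ref{[distdef]}, $\dis$ on this orbit runs through the values $h'(P')+c$ as $c$ runs over the multiset of offsets of the relevant sectors (respectively coordinate-plane facets); I would verify that this multiset is the same for every generic $P'$, and the same for every planar $P'$. I would also note that $\dis(P')=h(P')=h'(P')$ for $P'\in{\cal U}$, the first case of Definition \ref{[distdef]} giving offset $0$ there.

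Next I would encode the two offset multisets as Laurent polynomials $\Phi(q)\equiv\sum_\sigma q^{c_\sigma}$, over the $48$ sectors, and $\bar\Phi(q)\equiv\sum_\phi q^{c_\phi}$, over the $24$ coordinate-plane facets; reading Figure \ref{[crossect]} (the $48$ sectors being two $x\to-x$ mirror images of the $24$ displayed) yields $\bar\Phi(q)=q^3+3q^2+5q+6+5q^{-1}+3q^{-2}+q^{-3}$ and $\Phi=2\bar\Phi$, though for the lemma only their being Laurent polynomials will be used. Writing $g(q)\equiv\sum q^{h'(P')}$ over generic vertices $P'\in{\cal U}$ and $\bar g(q)\equiv\sum q^{h'(P')}$ over planar vertices $P'\in{\cal U}$, the orbit contributions (together with $\dis|_{\cal U}=h'|_{\cal U}$) assemble into
\begin{eqnarray*}
  \textstyle\sum_n S_2(n)\,q^n &=& g+\bar g , \\
  \textstyle\sum_n {\bar S}_1(n)\,q^n &=& \bar g , \\
  \textstyle\sum_n S(n)\,q^n &=& \Phi\,g+\bar\Phi\,\bar g , \\
  \textstyle\sum_n {\bar S}(n)\,q^n &=& \bar\Phi\,\bar g .
\end{eqnarray*}
Eliminating $g$ and $\bar g$ then gives $\sum_n{\bar S}(n)q^n=\bar\Phi(q)\sum_n{\bar S}_1(n)q^n$ and $\sum_n S(n)q^n=\Phi(q)\sum_n S_2(n)q^n+(\bar\Phi-\Phi)(q)\sum_n{\bar S}_1(n)q^n$, and reading off the finitely many coefficients of $\bar\Phi$, $\Phi$ and $\bar\Phi-\Phi$ (reversing the index to match the $n+j$ shifts of the statement) delivers the constants $a_j,b_j,c_j$.

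The hard part will be the bookkeeping behind the orbit claim: confirming that a generic (respectively planar) vertex's ${\cal O}_3$-orbit meets each of the $48$ sectors (respectively each of the $24$ coordinate-plane facets) exactly once, so that the offset multiset --- and hence $\Phi$ and $\bar\Phi$ --- is genuinely independent of the representative chosen, and checking that Definition \ref{[distdef]} is unambiguous on those facets (the remark that sectors sharing a coordinate plane carry equal offsets is exactly what makes the planar tally consistent). The little lemma that vertices never carry equal-magnitude coordinates is brief but load-bearing: it is what collapses the seven a priori orbit types for ${\cal U}$ down to the two that actually occur.
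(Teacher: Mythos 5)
Your proposal is correct and follows essentially the same route as the paper: both use the rule of thumb (\ref{[thumb]}) to reduce every ${\cal O}_3$-orbit to either a $48$-element generic type or a $24$-element coordinate-plane type, then tally the sector offsets of Definition \ref{[distdef]} across the orbit, your $\Phi = 2\bar\Phi$ identity being exactly the paper's ``count twice and subtract the double-counted boundary ${\bar S}(n)$'' step. The generating-function packaging is a cosmetic difference only; your explicit $\bar\Phi$ agrees with the coefficients the paper writes out longhand.
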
 
\begin{proof} 
Firstly notice that via Equation \ref{[thumb]} the only tiling vertices on the 
boundary of a sector are those interior to a facet on a coordinate plane, 
since the other two planes are diagonal. 
\par 
Now via Definition \ref{[distdef2]} and Theorem \ref{[distthm]}, for $n > 0$, 
\begin{eqnarray*} 
  {\bar S}(n) &=& 6 {\bar S}_1(n) + 3 {\bar S}_1(n-1) + 3 {\bar S}_1(n+1) + {\bar S}_1(n+2) + {\bar S}_1(n-2) \\ 
       && +\ {\bar S}_1(n+3) + {\bar S}_1(n-3) + 2 {\bar S}_1(n+1) + 2 {\bar S}_1(n-1) \\ 
       && +\ 2 {\bar S}_1(n+2) + 2 {\bar S}_1(n-2) ; \\ 
  S(n) &=& 2\bigl( 6 S_2(n) + 3 S_2(n-1) + 3 S_2(n+1) + S_2(n+2) + S_2(n-2) \\ 
       && +\ S_2(n+3) + S_2(n-3) + 2 S_2(n+1) + 2 S_2(n-1) \\ 
       && +\ 2 S_2(n+2) + 2 S_2(n-2) \bigr) - {\bar S}(n) ; \\ 
\end{eqnarray*} 
the final term corrects for the boundary being counted double, and is then 
substituted via the first equation. 
\end{proof} \noindent 
\par 

\begin{Thm} \label{[surfun]} 
\begin{equation*} 
  {\bar S}(n) = \begin{cases} 
    1 & {\rm if\ } n = 0 , \\ 
    4 n & {\rm\ if\ } n > 0 . 
  \end{cases} 
\end{equation*} 
\begin{equation*} 
  S(n) = \begin{cases} 
    1 & {\rm if\ } n = 0 , \\ 
    2(n^2 + 1) & {\rm\ if\ } n > 0 ; 
  \end{cases} 
\end{equation*} 
\end{Thm}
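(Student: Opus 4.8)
The plan is to pin down the two sector counts $\bar S_1(n)$ and $S_2(n)$ in closed form and then let Lemma~\ref{[linecomb]} convert these into $\bar S(n)$ and $S(n)$; it is cleanest to work throughout with the ordinary generating functions $B(q)=\sum_n\bar S_1(n)q^n$ and $C(q)=\sum_n S_2(n)q^n$. Collecting the repeated shifts in the proof of Lemma~\ref{[linecomb]} shows its two linear combinations share the coefficient pattern $a_0=6$, $a_{\pm1}=5$, $a_{\pm2}=3$, $a_{\pm3}=1$; writing $A(q)=6+5(q+q^{-1})+3(q^2+q^{-2})+(q^3+q^{-3})$ for the associated Laurent polynomial and noting that in the variable $u=q+q^{-1}$ it factors as $A=u(u+1)(u+2)$, i.e.
\[
  A(q)=\frac{(1+q^2)\,(1+q+q^2)\,(1+q)^2}{q^3},
\]
Lemma~\ref{[linecomb]} becomes, coefficient by coefficient,
\[
  \textstyle\sum_n\bar S(n)q^n=A(q)\,B(q),\qquad \sum_n S(n)q^n=A(q)\,\bigl(2\,C(q)-B(q)\bigr)
\]
(the sector counts $\bar S_1,S_2$ vanish below height $3$, so the negative shifts are harmless; a direct check at $n=0$, where only the deepest positive shift contributes, returns $\bar S(0)=S(0)=1$, the lone vertex $O$, which lies on the coordinate planes).

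To compute $B(q)$: by Theorem~\ref{[distthm]}, on $\mathcal U$ one has $\dis=h=h'$, the sector offset of Definition~\ref{[distdef]} vanishing on the branch $|x|\ge|y|\ge|z|$. On the facet $z=0$ the rule of thumb \ref{[thumb]} collapses to $x,y\not\equiv0$ and $x\not\equiv\pm y\pmod4$, leaving exactly the four residue types $(x\bmod4,\,y\bmod4)\in\{(2,1),(2,3),(1,2),(3,2)\}$; on each of these $s(P')$ is constant and $h'(x,y,0)$ is an explicit affine form, and $x\ge y\ge0$ pins $(x,y)$ to a lattice triangle over which $\sum q^{h'}$ is rational. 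Adding the four contributions and simplifying gives
\[
  B(q)=\frac{q^3}{(1-q)\,(1-q^3)\,(1+q^2)}.
\]

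To compute $C(q)$: the decisive observation is that $h'(x,y,z)$ does not involve $z$. So a vertex of $\mathcal U$ is specified by first choosing $(x,y)$ with $x>y\ge0$ among the ten residue types mod $4$ allowed by $x\not\equiv\pm y$, and then choosing $z\in\{0,1,\dots,y\}$ in whichever residue class(es) mod $4$ survive $z\not\equiv\pm x$ and $z\not\equiv\pm y$ --- a single class for eight of the ten types, two classes for the exceptional pair $(x,y)\equiv(0,2)$ and $(x,y)\equiv(2,0)$. Each type then contributes to $C(q)$ a double sum over $(x,y)$ of $q^{h'(x,y)}$ weighted by the number of admissible $z$ (quasi-linear in $y$); summing the ten contributions and simplifying yields the compact
\[
  C(q)=\frac{q^3}{(1-q)\,(1-q^3)\,(1-q^4)}.
\]

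Assembling: substituting into the two identities above, the cyclotomic factors of $A(q)$ cancel against the denominators of $B$ and $C$. One gets $A(q)B(q)=(1+q)^2/(1-q)^2$, and, since $2C(q)-B(q)=q^3/\bigl((1-q)(1-q^2)(1-q^3)\bigr)$, also $A(q)\bigl(2C(q)-B(q)\bigr)=(1+q)(1+q^2)/(1-q)^3$; reading off coefficients gives $\bar S(n)=4n$ and $S(n)=2(n^2+1)$ for $n\ge1$, with value $1$ at $n=0$ in each case, in agreement with Figure~\ref{[ctertab]}. The main obstacle is the enumeration underlying $C(q)$: classifying the ten residue types correctly, determining for each which residue class(es) $z$ may occupy, tracking the quasi-periodic remainder in the count of admissible $z$ as $z$ runs up to $y$, and managing the boundary near $x=y$, $y=z$, $z=0$ (eased by \ref{[thumb]} already barring $x=y$ and $y=z$ for genuine vertices). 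A reader preferring to avoid generating functions would instead feed the closed forms of $\bar S_1$ and $S_2$ --- quasi-polynomials of degrees $1$ and $2$ --- through the linear combinations of Lemma~\ref{[linecomb]} directly, the point then being that all the attendant floor-function corrections telescope away; that is where an arithmetic slip is most likely, and is best insured against by checking the tabulated values and a few further computed entries.
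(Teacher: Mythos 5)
Your proposal is correct in outline and its algebra checks out, but it takes a genuinely different route from the paper. The paper never computes $\bar S_1$ or $S_2$ explicitly: it uses the translations to get the recurrences $\bar S_1(n)=\bar S_1(n-4)+\bar S_0(n)$ with $\bar S_0$ eventually periodic of period $6$, and $S_2(n)=S_2(n-3)+S_1(n)$, $S_1(n)=S_1(n-6)+S_0(n)$ with $S_0$ eventually constant of period $4$, concluding only that $\bar S_1(12i+j)$ and $S_2(12i+j)$ are quasi-polynomials of degrees $1$ and $2$; Lemma~\ref{[linecomb]} transfers that structure to $\bar S$ and $S$, and the single polynomial is then pinned down by matching finitely many tabulated values --- exactly the philosophy of Discussion items (B) and (C), that the actual coefficients are irrelevant. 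You instead make the coefficients the whole proof: you compute $B(q)$ and $C(q)$ as explicit rational functions by a residue-class enumeration and let the cyclotomic cancellation against $A(q)=u(u+1)(u+2)$ do the rest. I have verified your $A$ against the paper's Lemma~\ref{[linecomb]}, your $B$ and $C$ against direct enumeration through height $9$ (e.g.\ $S_2(3),\dots,S_2(9)=1,1,1,2,3,3,4$, the number of partitions of $n-3$ into parts $1,3,4$), and the final identities $AB=(1+q)^2/(1-q)^2$ and $A(2C-B)=(1+q)(1+q^2)/(1-q)^3$; note also that your denominators $(1-q^3),(1-q^4),(1-q^6)$ are precisely the paper's translation recurrences in generating-function form. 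What your route buys is independence from the table (needed only as a sanity check --- and beware that the $S(n)$ row of Figure~\ref{[ctertab]} is displaced one column, reading $4,10,\dots$ at $n=0$ where it should read $1,4,10,\dots$); what it costs is the enumeration behind $C(q)$, which you rightly flag as the main obstacle and leave as a sketch. Until that ten-case residue computation is written out (or replaced by the paper's degree-and-period argument plus enough checked values), this remains a plan rather than a proof, but it is a sound and arguably cleaner one.
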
 
\begin{proof} 
$P = (x,y,z)$ 
will be restricted implicitly to vertices at height $n$ in sector ${\cal U}$. 
Also assume $n > 6$, avoiding special values for $n \le 0$. 
Lemma \ref{[transrxy]} is employed without reference, noting ${\cal U}$ is a subset of all relevant quadrants. 
A leaning tower of further subsidiary functions follows: 
\par 
Firstly via $P \to {\rm T_4}^{-1}P$ from Equation \ref{[trans3]} etc., 
\begin{eqnarray*} 
  {\bar S}_1(n) &\equiv& \#( P\ |\ 0 = z \le y \le x )  =  {\bar S}_1(n-4) + {\bar S}_0(n) ; 
\end{eqnarray*} 
where via $P \to {\rm T_6}^{-1}P$, 
\begin{eqnarray*} 
  {\bar S}_0(n) &\equiv& \#( P\ |\ x-4 < y \le x )  =  {\bar S}_0(n-6) = {\rm constant}, 
\end{eqnarray*} 
depending on $n \bmod 6$, $n \bmod 4$. 
Hence the ${\bar S}_1(12 i + j)$ are polynomials linear in $i$, depending only 
on $j = n \bmod 12$. 
\par 
Similarly via $P \to {\rm T_3}^{-1}P$, 
\begin{eqnarray*} 
  S_2(n) &\equiv& \#( P\ |\ 0 \le z \le y \le x )  =  S_2(n-3) + S_1(n) ; 
\end{eqnarray*} 
where via $P \to {\rm T_6}^{-1}P$, 
\begin{eqnarray*} 
  S_1(n) &\equiv& \#( P\ |\ 0 \le z < 2 )  =  S_1(n-6) + S_0(n) ; 
\end{eqnarray*} 
where via $P \to {\rm T_4}^{-1}P$, 
\begin{eqnarray*} 
  S_0(n) &\equiv& \#( P\ |\ 0 \le z < 2\ \&\ z \le y < 4 ) 
  =  S_0(n-4) = {\rm constant}, 
\end{eqnarray*} 
depending on $n \bmod 3$, $n \bmod 6$, $n \bmod 4$. 
Hence the $S_2(12 i + j)$ are polynomials quadratic in $i$, depending only on 
$j = n \bmod 12$. 
\par 
For via Lemma \ref{[linecomb]}, both ${\bar S}(12 i + j), S(12 i + j)$ are 
also such sets of polynomials; and by inspection of Figure \ref{[ctertab]}, 
each set reduces (mysteriously) to the single polynomial in $n > 0$ shown. 
\end{proof} 
\par 

\section{Discussion} \label{[discus]} 
\par 

Coordination sequences for networks associated with lattices in Euclidean 
{$d$-space} are established in \cite{[Con97]} and \cite{[Bac98]} for numerous 
classical cases, using general group-theoretic methods. However, an explicit 
expression for the coordination sequence $S(n)$ in Theorem \ref{[surfun]} 
--- case $d = 3$ of Equation (3.43) in \cite{[Con97]} --- 
was designated conjectural, 
and has since been dubbed by one author `really tricky'. 	 
\par 

The approach used above relies on two ideas. One is intuitively obvious: 
for all vertices $P$ lying sufficiently far from the height zero vertex $O$ 
in the direction of translation ${\rm T}$, translation must respect height, 
in the sense that $\dis({\rm T}\,P) = \dis(P) + t$ for constant $t$. 
As a result, the sphere of height $n + t$ is the union of translations of 
overlapping segments of the sphere of height $n$. The difficulty comes in 
quantifying regions in which this respectful behaviour can be guaranteed, 
for some neighbourhood of $P$ sufficient to facilitate induction: 
it is overcome by overlapping adjacent quadrants so far, that $P$ cannot 
avoid lying well inside (at least) one. 
\par 

Given the explicit expression Definition \ref{[distdef]} for height, 
it would be feasible (though tedious) to compute content $S(n)$ as the number of 
solutions of the compound linear equation $h(x,y,z) = n$. However, it is not 
actually necessary to know $\dis(P)$ in order to find $S(n)$; 
only Lemma \ref{[transrxy]}, that outside some finite initial region, 
translation respects height. 
\par 

References to `inspection' are in practice made tongue-in-cheek. Frequent 
resort to a computer is inevitable if the proof is to be fully checked, 
starting with tabulation of $\dis(P) \le 20$ via some tree-search algorithm, 
and implementation of $h(P)$, and extending either to compute $S(n)$ by 
enumeration. 
\par 

Various features of the proofs merit further discussion. \\ 
\noindent 
(A) The coordinates employed above for sodalite are special to dimension 
$d = 3$, and do not generalise to other spaces. Conventional coordinates 
for {d-space} sodalite (aka `honeycomb of permutohedra') are sketched in 
\cite{[WikPH]}. 
\par 

\noindent 
(B) The proof of Lemma \ref{[linecomb]} and Theorem \ref{[surfun]} involved 
establishing existence of moderately complicated linear combinations and 
polynomial sets; however, their actual coefficients are ultimately irrelevant. 
\par 

\noindent 
(C) Indeed, if only we could be certain in advance that the the final result 
in Theorem \ref{[surfun]} would be a quadratic polynomial (or more generally, 
ultimately satisfied some linear recurrence with constant coefficients and 
known order), only minor routine computation would subsequently be required. 
\par 

\noindent 
(D) Although in this instance height $\dis(P)$ is represented by expression 
$h(P)$ for all $\dis(P) \ge 0$, content $S(n)$ takes special values unless 
$n = \dis(P) \ge 1$. In other situations it may be nontrivial to establish a 
lower bound on height above which the corresponding functions behave well. 
\par 

\noindent 
(E) Larger still is the bound --- $\dis(P) \ge 15$ in Lemma \ref{[transect]} 
--- required to ensure that height is respected by inverse translation, 
below which we are obliged to verify a result via inspection. 
Unfortunately this bound depends in a complicated fashion on the geometric 
interaction between translations, and may well grow rapidly with dimension. 
\par 

\noindent 
(F) The quadrant associated with a translation ${\rm T}$ would in general 
be the union of those sectors having an edge along its axis. This rule is 
subverted by ${\rm T}_3$ in Equation \ref{[trans3]}, which spills over into 
a further 6 adjacent sectors. The effect is to simplify the proof of 
Lemma \ref{[transect]}, which otherwise would otherwise involve (dimension) 
$d = 3$ translations including ${\rm T}_6$. 
\par 

\noindent 
(G) The proof of Lemma \ref{[linecomb]} would be considerably complicated by 
any necessity to consider vertices lying on lower-dimensional boundary elements 
(here edges, corner) of a sector. 
\par 

\noindent 
(H) Assignment of coordinates labelling vertices is a delicate matter, 
governed by availability of convenient generators for the symmetry group. 
In particular, it may be inadvisable to locate vertex $O$ at origin $(0,0,0)$, 
or indeed to assign a vertex to the origin at all: 
instead the frame should reflect the full point-group of the network. 
\par 

All in all there appear to remain considerable obstructions to recasting 
our approach as an abstract theorem applicable to a class of networks, 
analogous to Theorems 2.4 and 2.9 of \cite{[Con97]}. 
In the meantime, the method has been successfully applied to the snub-square 
and knight's move networks. 
\par 


\begin{thebibliography}{1}

\bibitem{[WikPH]}
\emph{Permutohedron}, \url{en.wikipedia.org/wiki/Permutohedron}.

\bibitem{[Bac98]}
R.~Bacher, P.~de~la Harpe, and B.~Venkov, \emph{S\'eries de croissance et
  polyn\^omes d'{E}hrhart associ\'es aux r\'eseaux de racines}, Ann. Inst.
  Fourier (Grenoble) \textbf{49} (1999), 727--762.

\bibitem{[Con97]}
J.~H. Conway and N.~J.~A. Sloane, \emph{Low-dimensional lattices. {VII}.
  coordination sequences}, Proc. A Royal Soc. \textbf{453} (1997), 151--154,
  \url{neilsloane.com/doc/Me220.pdf}.

\end{thebibliography}


\providecommand{\bysame}{\leavevmode\hbox to3em{\hrulefill}\thinspace}
\providecommand{\MR}{\relax\ifhmode\unskip\space\fi MR }
\providecommand{\MRhref}[2]{%
  \href{http://www.ams.org/mathscinet-getitem?mr=#1}{#2}
}
\providecommand{\href}[2]{#2}


\clearpage

\end{document}